\documentclass[reqno, final, a4paper]{amsart}
\usepackage{color}
\usepackage[colorlinks=true,allcolors=blue
%,backref=page
]{hyperref}
\usepackage{amsmath, amssymb, amsthm}
\usepackage{mathrsfs}
\usepackage{mathtools}
\usepackage[noabbrev,capitalize,nameinlink]{cleveref}
\crefname{equation}{}{}
\usepackage{fullpage}
\usepackage[noadjust]{cite}
\usepackage{graphics}
\usepackage{pifont}
\usepackage{cancel}
\usepackage{tikz}
\usepackage{bbm}
\usepackage[T1]{fontenc}
\usetikzlibrary{arrows.meta}

\usepackage{environ}
\usepackage{framed}
\usepackage{paralist}
\usepackage{url}
\usepackage[linesnumbered,ruled,vlined]{algorithm2e}
\usepackage[noend]{algpseudocode}
\usepackage[labelfont=bf]{caption}
\usepackage{framed}
\usepackage[framemethod=tikz]{mdframed}
\usepackage{pgfplots}
\usepackage{appendix}
\usepackage{graphicx}
\usepackage[textsize=tiny]{todonotes}
\usepackage{tcolorbox}
\usepackage{xcolor}
\usepackage[shortlabels]{enumitem}
\crefformat{enumi}{#2#1#3}
\crefrangeformat{enumi}{#3#1#4 to~#5#2#6}
\crefmultiformat{enumi}{#2#1#3}
{ and~#2#1#3}{, #2#1#3}{ and~#2#1#3}
\allowdisplaybreaks

% \left(\right) should behave the same as ()
\let\originalleft\left
\let\originalright\right
\renewcommand{\left}{\mathopen{}\mathclose\bgroup\originalleft}
\renewcommand{\right}{\aftergroup\egroup\originalright}

% fraction in brackets!!!

\crefname{algocf}{Algorithm}{Algorithms}

\crefname{equation}{}{} %remove ``Equation''
 %Oxford comma
\AtBeginEnvironment{appendices}{\crefalias{section}{appendix}} %appendices

\usepackage[color]{showkeys} %add in 'final' into parameter to remove showkeys

% showkeys font
\colorlet{refkey}{orange!20}
\colorlet{labelkey}{blue!30}

\crefname{algocf}{Algorithm}{Algorithms}

% ------   Theorem Styles -------
\numberwithin{equation}{section}
\newtheorem{theorem}{Theorem}[section]

\newtheorem{claim}[theorem]{Claim}
\crefname{claim}{Claim}{Claims}

\newtheorem*{question*}{Question}

\crefname{fact}{Fact}{Facts}

\theoremstyle{definition}

\newtheorem*{definition*}{Definition}

\theoremstyle{remark}

\newtheorem*{remark*}{Remark}

% ----- Delimiters ----

\newcommand{\eps}{\varepsilon}

\newcommand*{\claimproofname}{Proof of claim}
\newenvironment{claimproof}[1][\claimproofname]{\begin{proof}[#1]}{\end{proof}}

\usepackage{lineno}
\theoremstyle{plain}
\newtheorem{thm}{Theorem}[section]

\newtheorem*{clm*}{Claim}

\theoremstyle{definition}

\newtheorem{conj}[thm]{Conjecture}

\usetikzlibrary{shapes.geometric}
\usepackage{tikz,xcolor,verbatim,hyperref}

\usepackage{euflag}

\title[A note on finding large  transversals efficiently]{A note on finding large  transversals efficiently}
 \author{Michael Anastos}
 \author{Patrick Morris}

 \address{MA: Institute of Science and Technology Austria (ISTA). \newline \indent  PM: Universitat Polit\`ecnica de Catalunya (UPC), Barcelona, Spain.}

\thanks{ MA was supported by the Austrian Science Fund (FWF) [10.55776/ESP3863424] and by the European Union’s Horizon
2020 research and innovation programme under the Marie Skłodowska-Curie grant - project number 101034413 \euflag.  PM was supported  by  the European Union's Horizon Europe   Marie Sk{\l}odowska-Curie grant RAND-COMB-DESIGN - project number
101106032 {\euflag}.}

\email{michael.anastos@ist.ac.at, pmorrismaths@gmail.com}

\date{\today}

%\linenumbers

\begin{document}

\begin{abstract}
In an $n \times n$ array filled with symbols, a \emph{ transversal}  is a collection of entries with distinct rows,  columns and symbols. In this note we 
show that if no symbol appears more than $\beta n$ times, the array contains a  transversal of size $(1-\beta/4-o(1))n$. In particular, if the array  is filled with $n$ symbols, each appearing $n$ times (an equi-$n$ square), we get   transversals of size $(3/4-o(1))n$. 
Moreover, our proof gives a deterministic algorithm with polynomial running time, that finds  these  transversals. 
 
 %, improving on the previously best known lower bound of $2n/3$ due to Aharoni, Berger, Kotlar and Ziv.
\end{abstract}

\maketitle

\vspace{-6mm}
\section{Introduction}

A  \emph{Latin square}  of order $n$ is an $n \times n$ array filled with $n$ symbols such that each row and each column contains each symbol exactly once. These objects are some of the most central in combinatorial design theory and their study dates back to Euler in the 1700s. %In the 1970s, researchers began to question what global properties are implied by the local constraints that define Latin squares and other combinatorial designs. In particular, in Latin squares, a 
%A major research theme in the stu the existence of large \emph{transversals}. 
A \emph{transversal}\footnote{This is sometimes referred to as a \emph{partial} transversal with the term transversal being reserved for partial transversals of size $n$, which we will refer to as \emph{full} transversals here.} in a Latin square (or more generally in any $n \times n$ array filled with symbols) is a collection of  entries of the square with no repeated row, column or symbol.  The following recent result of Montgomery \cite{montgomery2023proof} proves the existence of very large  transversals in Latin squares.

\begin{thm}[Montgomery \cite{montgomery2023proof}] \label{thm:mont}
There exists $n_0 \in \mathbb{N}$ such that every Latin square of order $n\geq n_0$ contains a  transversal of size $n-1$.
\end{thm}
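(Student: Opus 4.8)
Since \cref{thm:mont} is a hard theorem whose content lies in the regime of large $n$ (small $n$ being a finite check), ``how I would prove it'' means following the absorption strategy that underlies this line of work. First I would pass to the graph formulation: a Latin square of order $n$ is the same as a proper edge-colouring of $K_{n,n}$ with $n$ colours, one vertex class indexing rows and the other columns, the colour of an edge being the symbol in that cell; a transversal of size $t$ is then exactly a \emph{rainbow matching} of size $t$, i.e.\ a matching of $t$ edges with $t$ distinct colours. The goal becomes a rainbow matching leaving exactly one vertex uncovered on each side and exactly one colour unused.

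It is worth recording why the elementary approach stops short of $n-1$. Take a maximum rainbow matching $M$ missing $k$ rows $R_0$, $k$ columns $C_0$ and $k$ symbols $S_0$; then no cell of $R_0\times C_0$ uses a symbol of $S_0$, and chasing chains of symbol-preserving swaps — replace the $M$-edge of colour $s$ by an uncovered cell of colour $s$ — forces an ever larger set of cells to be coloured by symbols of $M$. A Woolbright-type double count of this set against the row- and column-regularity of a Latin square gives $k=O(\sqrt n)$, hence a transversal of size $n-O(\sqrt n)$; but the switching tree fans out and this cannot be pushed to $n-1$ without planning ahead.

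So the plan is the absorption method, in three steps. First, before doing anything else, reserve a small partial rainbow matching $\mathcal A$ inside the square such that for every small \emph{balanced} residue — equal numbers of leftover rows, columns and symbols — there is a rainbow matching on $V(\mathcal A)$ together with that residue using exactly the colours of $\mathcal A$ and of the residue; one builds $\mathcal A$ from many small local absorbing gadgets, themselves made of short rainbow alternating cycles, the crucial input being a counting lemma (using properness essentially) that such gadgets are plentiful enough to assemble a robust $\mathcal A$ by distributive absorption. Second, on the entries outside $\mathcal A$, run a semi-random, nibble-type process (or a random-greedy one) producing a rainbow matching that covers all but $o(n)$ of the rows, columns and symbols, keeping the uncovered parts quasirandomly spread and balanced across the two sides and the colours. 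Third, feed this residue into $\mathcal A$ to obtain a transversal of size $n-O(1)$, then iterate the absorption so the final leftover has bounded size and close it by a deterministic endgame, reaching size exactly $n-1$.

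The main obstacle is that last step. Already the existence of sufficiently powerful absorbers inside an \emph{arbitrary} Latin square is a substantial structural and probabilistic problem, but the genuine difficulty is that a full transversal of size $n$ need not exist — the addition table of $\mathbb{Z}_n$ for even $n$ is the classical obstruction — so the whole scaffolding must tolerate a deficiency of exactly one, and the highly structured squares near that obstruction have to be isolated and treated separately. That is what turns this sketch into a long and delicate argument.
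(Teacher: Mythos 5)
The paper does not prove \cref{thm:mont}; it is stated as a cited result of Montgomery and used only as context and motivation, so there is no in-paper proof against which to compare your sketch. What you have written is a fair high-level description of the actual strategy in Montgomery's paper — the translation to rainbow matchings in properly edge-coloured $K_{n,n}$, the Woolbright-type barrier at $n-O(\sqrt n)$, distributive absorption built from small rainbow switching gadgets, a semi-random nibble to reach $n-o(n)$ with a well-spread residue, and a delicate endgame that must tolerate the deficiency of one coming from squares like the even-order cyclic group table. That is the right shape of the argument, but it is a roadmap rather than a proof, and each of its stages (the absorber counting lemma, robustness of the absorbing structure, quasirandomness of the residue, the treatment of near-extremal squares) is itself a substantial theorem. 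It is also worth flagging that this machinery is disjoint from what the present note actually does: the note's own contribution (\cref{thm:main,thm:main2}) proves a much weaker bound of $(1-\beta/4-o(1))n$ via a short, deterministic, purely combinatorial switching argument — building nested index sets $I(\omega)$ with a permutation $S_{I(\omega)}$ that re-covers every diagonal symbol and injects one new one — and makes no use of absorption or randomisation. So if your task was to supply the paper's proof of \cref{thm:mont}, the honest answer is that the paper has none and defers entirely to \cite{montgomery2023proof}; if your task was to outline Montgomery's argument, your sketch is directionally correct but far from a complete proof.
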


Theorem \ref{thm:mont} was a huge breakthrough, improving on many previous results exploring large  transversals in Latin squares (see \cite{montgomery2023proof} and the references therein) and settling a conjecture of Brualdi \cite{brualdi1991combinatorial}.  %It is tight due to simple examples such as the addition table of $\mathbb{Z}_n$ when $n$ is even, where a \emph{full} transversal of size $n$ does not exist. 
Some years before Brualdi, in 1967 Ryser  \cite{ryser1967neuere}  conjectured that in the case of $n$  odd %, where no such examples are known, 
one can actually get a  transversal of size $n$ (a \emph{full} transveral). This  remains open. The two statements together, that there are  transversals of size $n$ when $n$ is odd and $n-1$ when $n$ is even, is what has become known as the \emph{Ryser-Brualdi-Stein conjecture} with Stein's name appearing as he offered several variations and strengthenings of the conjecture \cite{stein1975transversals}. 

One of Stein's conjectures concerned \emph{equi-$n$} squares - $n \times n$ arrays filled with $n$ symbols each appearing exactly $n$ times. Note that Latin squares are particular instances of equi-$n$ squares, where additionally one bounds the number of appearances in each column and row. Stein conjectured that the row and column condition is in fact superfluous and that already being an equi-$n$ square is enough to have a large  transversal. 

\begin{conj}[Stein's conjecture \cite{stein1975transversals}] \label{conj:stein}
Every equi-$n$ square has a  transversal of size $n-1$. 
\end{conj}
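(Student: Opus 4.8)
\emph{Proof proposal.} We first observe that \cref{conj:stein} is, in fact, \emph{false} as literally stated: Pokrovskiy and Sudakov constructed, for infinitely many $n$, equi-$n$ squares whose largest transversal has size at most $n-\Omega(\log n/\log\log n)$. The source of this is exactly the concentration of symbols that a result like our main theorem must account for: in a Latin square each symbol forms a perfect matching from the rows to the columns, but in an equi-$n$ square a symbol may pile up in a handful of rows or columns, and a short counting argument then forces any transversal to leave some rows and columns uncovered. So the right question is how close to $n-1$ one can come; the plan below aims at a transversal of size $(1-o(1))n$, and, with the heavier absorption machinery, of size $n-\mathrm{polylog}\,n$ --- which would match the Pokrovskiy--Sudakov bound up to the exponent of the logarithm, and is the best one can hope for.

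The set-up is the usual one: encode the equi-$n$ square as an edge-colouring of $K_{n,n}$ (one side the rows, the other the columns, the colour of $rc$ the symbol in cell $(r,c)$, each of the $n$ colours used $n$ times), so that a transversal is a rainbow matching. Call a colour \emph{heavy} if some row or column contains it more than $\log^2 n$ times, and \emph{light} otherwise. The first --- and, I expect, the hardest --- step is to show that one can delete a comparatively small collection of rows, columns and heavy colours so that the residual array is ``nearly proper'': an edge-colouring of a graph with minimum degree $(1-o(1))n$ in which every surviving colour class has $(1-o(1))n$ edges and meets each vertex only $\mathrm{polylog}\,n$ times. Making this precise --- bounding how many lines and colours must go, and ensuring the light colour classes are not decimated in the process --- is the technical heart of the matter, and is precisely why the asymptotic statement for equi-$n$ squares is not as soft as \cref{thm:mont}.

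On such a nearly proper colouring one then runs the semi-random (R\"odl-nibble) process for rainbow matchings: repeatedly select a tiny random set of cells (each with probability $\Theta(\varepsilon/n)$, reweighted to keep colour usage balanced), discard any that clash in row, column or colour with another selected cell, and delete the rows, columns and colours that get used. The standard second-moment and concentration estimates --- or, more cheaply, a black-box hypergraph-matching theorem applied to the conflict hypergraph on the cells --- show that a bounded number of rounds covers all but an $o(1)$-fraction of the rows, columns and colours, giving a rainbow transversal of size $(1-o(1))n$. This step is essentially routine once the previous paragraph is in hand, and it already beats the $(3/4-o(1))n$ bound of our main theorem, at the cost of being non-constructive.

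To drive the $o(n)$ uncovered rows, columns and colours down to a $\mathrm{polylog}$ defect one would, before running the nibble, reserve a random sub-array and build a Montgomery-style distributive absorber inside it, so that for every small leftover set $S$ it contains a rainbow matching covering exactly $S$; one runs the nibble on the complement and finishes with the absorber. \emph{This is where I expect the real obstacle to be}, and it is also where the counterexample bites: Montgomery's switchings and absorber constructions use throughout that each colour is a \emph{perfect} matching and each vertex meets each colour exactly once --- the rigidity that lets one reroute a rainbow matching onto a prescribed residue --- and nearly proper colourings simply do not have this rigidity, while the heavy colours genuinely cannot all be discarded. That interplay is the mechanism behind the $\Omega(\log n/\log\log n)$ lower bound, so no refinement of the argument will bring the defect to $1$. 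The honest target is therefore a transversal of size $n-\mathrm{polylog}\,n$ in every equi-$n$ square; the exact value $n-1$ of \cref{conj:stein} cannot be attained in general.
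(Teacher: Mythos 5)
You are right on the central point: \cref{conj:stein} is a conjecture, not a theorem of this paper, and it is \emph{false} --- the paper itself records the Pokrovskiy--Sudakov refutation immediately after stating it. So there is no proof in the paper to compare yours against, and your verdict agrees with the paper's framing. One small correction: the counterexample of \cite{pokrovskiy2019counterexample} gives equi-$n$ squares with no transversal larger than $n-\tfrac{1}{42}\log n$, so the forced defect is $\Omega(\log n)$, not merely $\Omega(\log n/\log\log n)$; your ``best one can hope for'' target should be calibrated against that.

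Where your proposal and the paper genuinely diverge is in how to salvage the conjecture. Your plan --- reduce to a nearly proper colouring, run a R\"odl nibble to get a rainbow matching of size $(1-o(1))n$, and bolt on a Montgomery-style absorber to shrink the defect --- is essentially the route of the independent work of Chakraborti, Christoph, Hunter, Montgomery and Petrov \cite{ugh} mentioned in the paper's additional note, and you are candid that the two hard steps (the cleaning step and the absorber in a non-proper colouring) are open in your sketch, so as written it proves nothing precise. The paper takes a completely different and much more elementary route to a weaker bound: starting from a transversal placed on the diagonal, it deterministically and iteratively builds, for each missing symbol $\omega$, a small index set $I(\omega)$ with a local permutation $S_{I(\omega)}$ satisfying properties (P1)--(P2), and a counting argument (\cref{claim:claim1}) shows that if the current transversal has size below $(1-\beta/4-\eps)n$ then boundedly many rounds must insert a genuinely new symbol, yielding an augmentation. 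This gives only $(3/4-\eps)n$ for equi-$n$ squares, but it is short, self-contained, and runs in deterministic polynomial time --- precisely the features your probabilistic programme would sacrifice.
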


It turns out that Conjecture \ref{conj:stein} was too bold, although it took  43 years to find a counterexample. This was done by Pokrovskiy and Sudakov \cite{pokrovskiy2019counterexample} who gave a nice construction refuting the conjecture.

\begin{thm}[Pokrovskiy-Sudakov \cite{pokrovskiy2019counterexample}]
There exists $n_0 \in \mathbb{N}$ such that for every integer $n\geq n_0$ there exists an equi-$n$ square with no  transversal of size larger than $n-\tfrac{1}{42}\log n$.
\end{thm}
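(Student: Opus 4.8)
The plan is to construct, for all sufficiently large $n$, an equi-$n$ square whose transversal \emph{deficiency} $\operatorname{def}(A):=n-\nu(A)$ (where $\nu(A)$ denotes the maximum transversal size) is $\Omega(\log n)$. The mechanism is an iterative construction in which a single step multiplies the order by an absolute constant $C$ while increasing the deficiency by a fixed amount; after $\Theta(\log_C n)$ steps the deficiency reaches $\Theta(\log n)$ along a geometric sequence of orders, and a padding argument extends this to all $n$ at the cost of worsening the implicit constant, which is the source of an explicit value such as $\tfrac1{42}$.

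For the base of the recursion one only needs a single equi-$n_0$ square with positive deficiency, and the order-$2$ Latin square $\left(\begin{smallmatrix}1&2\\2&1\end{smallmatrix}\right)$ already qualifies: each of its two $2$-subsets of cells with distinct rows and columns repeats a symbol, so it has no transversal of size $2$. (Should the inductive step require a larger base, one can iterate it a bounded number of times first.)

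The heart of the argument is a \emph{deficiency-boosting step}: from an equi-$m$ square $L$ with $\operatorname{def}(L)\ge d$ build an equi-$n$ square $M$ with $n=Cm$ and $\operatorname{def}(M)\ge d+1$. I would give $M$ a block structure: partition its rows, columns and symbols into $C$ groups of size $m$, place renamed copies of $L$ (together with one fixed small ``obstruction gadget'') on the diagonal blocks using disjoint symbol groups, and fill the off-diagonal blocks with the symbols of the diagonal groups so that every symbol occurs exactly $n$ times in $M$; a direct count shows the cell budgets match. The essential difficulty is that the off-diagonal filler cannot be chosen naively: if, say, it consists of Latin squares, then a transversal can be built purely from off-diagonal Latin transversals and all the engineered deficiency evaporates. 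The filler must therefore itself be ``bad'' and tightly coordinated with the diagonal blocks, so that any transversal which uses off-diagonal cells to repair a deficiency in a diagonal copy of $L$ is forced to open up a new one. Producing a filler that simultaneously (i) makes $M$ an equi-$n$ square, (ii) keeps the deficiency contributions of the diagonal copies intact, and (iii) genuinely adds one more unit of deficiency is the main design task.

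The step I expect to be the real obstacle is the matching upper bound $\nu(M)\le n-(d+1)$. A transversal of $M$ need not respect the block partition; it may mix rows, columns and symbols across all $C$ groups, so one cannot merely add up block deficiencies. I would argue by contradiction: take a transversal $\mathcal T$ with $|\mathcal T|\ge n-d$; analyse how its entries and the rows, columns and symbols they occupy split among the blocks and symbol groups; after charging the at most $d$ rows that $\mathcal T$ misses, use the structure of the filler to show that $\mathcal T$ induces inside the diagonal blocks partial transversals of copies of $L$ whose total size is already constrained by the induction hypothesis $\nu(L)\le m-d$, while the obstruction gadget costs one further unit; a closing count of occupied versus available rows, columns and symbols then yields $|\mathcal T|\le n-(d+1)$, contradicting the choice of $\mathcal T$. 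Iterating the boosting step from the base case produces equi-$n$ squares of deficiency $\Theta(\log n)$ along a geometric sequence of orders, and the padding step then delivers the bound $n-\tfrac1{42}\log n$ (for a suitable absolute constant) for every $n$.
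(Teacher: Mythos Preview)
The paper does not contain a proof of this theorem at all: it is stated purely as a cited result of Pokrovskiy and Sudakov, with no argument given. So there is no ``paper's own proof'' to compare your attempt against.

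As for your proposal on its own terms: it is not a proof but an outline of a strategy, and the outline has a genuine hole exactly where the content would have to be. You correctly identify that the entire difficulty lies in the deficiency-boosting step --- constructing the off-diagonal filler so that (i) the result is still an equi-$n$ square, (ii) the accumulated deficiency from the diagonal copies of $L$ survives, and (iii) one extra unit of deficiency is created --- and you explicitly say this is ``the main design task'' and ``the real obstacle,'' but you do not carry it out. The sketch of the upper-bound argument (``charge the missing rows, use the structure of the filler, close with a count'') presupposes precisely the structural properties of the filler that you have not specified. Without an actual construction of the filler and a concrete combinatorial lemma forcing the $+1$ in deficiency, none of the subsequent iteration or padding can get off the ground. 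In short, the proposal names the right shape of an inductive construction but supplies none of the mechanism that would make the induction go through.

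For context, the actual Pokrovskiy--Sudakov construction is explicit rather than a blind block recursion: they build the array directly with a layered structure and prove the deficiency lower bound by a careful counting argument tailored to that structure. If you want to pursue your recursive-block idea, you would need to invent a genuinely new gadget; nothing in your write-up indicates what it would be.
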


Our work here is motivated by an attempt to `save' Stein's conjecture by showing that equi-$n$ squares still contain large  transversals, if not as large as  in Conjecture \ref{conj:stein}. This route was started already by Stein \cite{stein1975transversals} who showed that every equi-$n$ square contains a  transversal of size $(1-1/e-o(1))n\approx 0.632 n$. This was improved by Aharoni, Berger, Kotlar and Ziv \cite{aharoni2017conjecture} who used topological methods to show the existence of  transversals of size $2n/3$. Our first result is the following.

\begin{thm} \label{thm:main2}
For every $\eps>0$, there exists $n_0\in \mathbb{N}$ such that every equi-$n$ square with $n\geq n_0$ has a  transversal of size $(3/4-\eps)n$. 
\end{thm}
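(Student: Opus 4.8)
The plan is to prove the more general statement announced in the abstract---that every $n\times n$ array in which no symbol occurs more than $\beta n$ times contains a transversal of size $(1-\beta/4-o(1))n$, and that a deterministic polynomial-time algorithm finds it---and to deduce Theorem~\ref{thm:main2} by specialising to $\beta=1$ and to $n$ large enough that the $o(1)$ term is below $\eps$. The algorithm itself is an augmentation loop: we maintain a partial transversal $T$ and, as long as $|T|$ is below the target, we look for a short ``augmenting structure'' that increases $|T|$ by one. Since there are fewer than $n$ augmentations and (as explained below) each search is polynomial, the whole procedure runs in polynomial time and is deterministic. Throughout, let $R$, $C$ and $S$ denote the rows, columns and symbols missed by $T$---each of $R$ and $C$ has size $m:=n-|T|$---and call a cell \emph{live} if both its row and its column are missed by $T$.

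The augmenting structures are \emph{rotations}. If some live cell carries a symbol of $S$ we extend $T$ and are done with this round. Otherwise every live cell $(i,j)$ carries a symbol $s\notin S$, and the unique entry $(r,c,s)\in T$ may be swapped out and replaced by $(i,j,s)$: this keeps $|T|$ unchanged but makes the row $r$ and the column $c$ free. Performing such swaps repeatedly grows a set $R^{\star}\supseteq R$ of rows and a set $C^{\star}\supseteq C$ of columns that can be simultaneously liberated by a bounded sequence of rotations; we carry out this search as a breadth-first exploration truncated at depth $O_{\eps}(1)$, checking after each rotation whether some newly created live cell now carries a missed symbol (and, exploiting the symmetry of a transversal in its rows, columns and symbols, we run the analogous rotations that free a symbol rather than a column). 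If any check succeeds we have gained an entry; otherwise we call $T$ \emph{locally maximal}. Crucially, local maximality forces that \emph{no symbol of $S$ appears anywhere in $R^{\star}\times C^{\star}$}.

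It remains to bound $m$ in the locally maximal case, which we do by double counting occurrences of the missed symbols. On one side, $R^{\star}\times C^{\star}$ is filled using only the at most $n-m$ used symbols, each occurring at most $\beta n$ times in the array, so $|R^{\star}|\,|C^{\star}|\le(n-m)\beta n$. On the other side, the rotations force $R^{\star}$ and $C^{\star}$ to be large: the $m^{2}$ live cells are covered by used symbols, each used at most $\beta n$ times, hence by at least $m^{2}/(\beta n)$ of them, and the corresponding entries of $T$ contribute at least that many rows to $R^{\star}$ and as many columns to $C^{\star}$; iterating the rotation (at each stage the fresh live cells are again covered by few heavily used symbols) enlarges $R^{\star}$ and $C^{\star}$ by a further $\Omega(m^{2}/(\beta n))$ at each step, so that once $m>(\beta/4+\eps)n$ a number of steps depending only on $\eps$ yields $\min(|R^{\star}|,|C^{\star}|)>\sqrt{\beta-\beta^{2}/4}\cdot n$, whence $|R^{\star}|\,|C^{\star}|>(\beta-\beta^{2}/4)n^{2}=(n-\beta n/4)\beta n\ge(n-m)\beta n$, a contradiction. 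Therefore $m\le(\beta/4+\eps)n$, as required. I expect the main obstacle to be exactly this quantitative expansion step: one must arrange the rotations so that the liberated rows and columns accumulate at the right rate against the degree bound $\beta n$---in particular ruling out, or separately handling, the possibility that the explored region stabilises into a ``closed'' subarray before reaching the threshold size, and dealing with used symbols of small multiplicity---so that the clean coefficient $\beta/4$, rather than a weaker constant, comes out of the accounting.
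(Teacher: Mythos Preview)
Your plan is close in spirit to the paper's, and the arithmetic that produces the constant $\beta/4$ is essentially the same quadratic inequality the paper uses (in their notation, $n_t^2-\beta m_t n\ge (\beta n/4-m_t)^2+\eps\beta n^2/2+\dots$; in yours, the fixed-point equation $k^2\le(k-m)\beta n$ has no real solution once $m>\beta n/4$). The expansion step you flag as the main obstacle is in fact fine: if $|R^\star|=|C^\star|=k$ and no new rows appear, then the $k^2$ cells of $R^\star\times C^\star$ use only the at most $k-m$ symbols whose $T$-entry already has its row in $R^\star$, so $k^2\le (k-m)\beta n$, and when $m\ge(\beta/4+\eps)n$ the left side exceeds the right by at least $\eps\beta n^2$, forcing $\Omega(\eps n)$ new rows per round and termination in $O(1/\eps)$ rounds.

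The genuine gap is elsewhere: the sentence ``local maximality forces that no symbol of $S$ appears anywhere in $R^{\star}\times C^{\star}$'' is not justified. Your sets $R^\star$ and $C^\star$ are built independently---$i^\star\in R^\star$ witnesses a rotation sequence freeing row $i^\star$, and $j^\star\in C^\star$ a (different) sequence freeing column $j^\star$---but to augment at a cell $(i^\star,j^\star)$ you must free the row and the column \emph{simultaneously}, and the two rotation sequences may conflict (they may consume the same free row of $R$, or the chain freeing $j^\star$ may pass through row $i^\star$, etc.). Nothing in your BFS rules this out, and the issue does not disappear for large $m$: it is a structural problem, not a counting one.

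The paper resolves exactly this point, and it is worth seeing how. Instead of a partial transversal with free rows/columns, they permute so that the current transversal lies on the diagonal and work with full permutations of subsquares: for each ``insertable'' symbol $\omega$ they carry an index set $I(\omega)\subseteq[n]$ and a permutation $S_{I(\omega)}$ of $A_{I(\omega)}$ such that replacing the diagonal on $I(\omega)$ by $S_{I(\omega)}$ keeps every previously seen symbol (their Property~(P1)) while gaining $\omega$. The basic move is the $2\times 2$ swap $\{(i,i),(j,j)\}\to\{(i,j),(j,i)\}$, which ties a row and a column together automatically, and the recursive build-up combines $I(A_{i,i})$, $I(A_{j,j})$ and $\{i,j\}$ only when the auxiliary neighbourhoods $R(i),R(j),R(I(A_{i,i})),R(I(A_{j,j}))$ are pairwise disjoint (their condition~(C3)). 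That disjointness is precisely the device that prevents the ``conflicting rotation'' problem you have, and they show it costs only $O_\eps(n)$ forbidden pairs, which is negligible in the count. If you want to rescue your row/column formulation, you would need an analogous disjointness condition on the rotation chains and a proof that enough non-conflicting pairs survive; the diagonal/permutation framework does this for free.
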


Moving away from equi-$n$ squares, Erd{\H{o}}s and Spencer \cite{erdos1991lopsided} considered $n\times n$ arrays which are \emph{globally bounded}. They showed that if an   $n \times n$ array is filled with symbols such that no symbol appears more than $\beta n$ times (we will call these  $\beta$-bounded $n$-squares) and $\beta<1/16$, then there is a full transversal. This influential result introduced the lopsided local lemma and has since been slightly improved to $\beta\leq 27/256$ by Bissacot, Fern\'andez, Procacci and Scoppola \cite{bissacot2011improvement} using ideas from statistical mechanics. Since these results, and the seminal work of Moser and Tardos \cite{moser2010constructive} in making the local lemma constructive, finding large  transversals is considered a benchmark application of the  performance of algorithmic versions of the local lemma. For example, Harris and Srinivasan \cite{harris2017algorithmic} and Harris \cite{harris2020new} tested their (randomised) algorithms on the problem of efficiently finding large (but not full)  transversals in $\beta$-bounded $n$-squares. 
Thus there is  a general interest in constructing  algorithms finding such  transversals.

Our main result is to give a simple \emph{deterministic} algorithm for  producing a large  transversal.    

 \begin{thm} \label{thm:main}
For every  $\eps>0$ and $0<\beta\leq 1$, there exists $n_0\in \mathbb{N}$ and
a deterministic polynomial time algorithm  that finds a  transversal of size $(1-\beta/4-\eps)n$ in any $\beta$-bounded $n$-square with $n\geq n_0$. 
\end{thm}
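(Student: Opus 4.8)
The plan is to maintain a partial transversal $T$ and, while $\abs T<(1-\beta/4-\eps)n$, to enlarge it by one entry; as there will be at most $n$ enlargements and each is carried out in polynomial time, this gives a deterministic polynomial-time algorithm. So everything reduces to showing that \emph{a partial transversal $T$ with $\abs T=k\le(1-\beta/4-\eps)n$ can be efficiently enlarged}. (A lazier approach — take a uniformly random permutation and discard one entry from each colour collision — only reaches size about $(1-\beta/2)n$, indeed $(1-1/e)n$ when $\beta=1$, so a genuine correction step is unavoidable.)

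Fix such a $T$, write $m:=n-k$, let $R,C$ be the free rows and columns (so $\abs R=\abs C=m\ge(\beta/4+\eps)n$), and let $S_T$ be the set of $k$ symbols used by $T$. If some cell of $R\times C$ carries a symbol outside $S_T$ we add it; otherwise we are \emph{stuck}: every entry of the $m\times m$ sub-array $R\times C$ lies in $S_T$. In the stuck case I would look for an \emph{augmenting chain}, the transversal analogue of an augmenting path. Concretely: a sequence $(r_1,c_1),\dots,(r_\ell,c_\ell)$ of cells of $T$, an injective way of moving each $(r_i,c_i)$ to a new cell in the same row or the same column (allowed to re-use rows and columns vacated by other cells of the chain), and a further cell $(r^*,c^*)\in R\times C$, such that the symbols rotate: the replacement of $(r_i,c_i)$ carries $\sigma(r_{i+1},c_{i+1})$, the replacement of $(r_\ell,c_\ell)$ carries a symbol outside $S_T$, and $\sigma(r^*,c^*)=\sigma(r_1,c_1)$. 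Making all these moves and adding $(r^*,c^*)$ turns $T$ into a partial transversal $T'$ of size $k+1$: the row and column conditions hold by construction, and the symbols telescope so that $S_{T'}=S_T\cup\{\text{one new symbol}\}$. Since being an augmenting chain amounts to there being a directed path in an auxiliary digraph on the $k$ cells of $T$ — with arcs recording which cell can be moved so as to pick up the symbol of which other cell, ``entry'' vertices being the cells whose symbol appears somewhere in $R\times C$, and ``exit'' vertices those that can be moved onto a symbol outside $S_T$ — a shortest one can be found by breadth-first search in polynomial time, exactly as one finds augmenting paths for bipartite matchings.

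The crux, and the step I expect to be hardest, is to show that such a chain must exist once $m\ge(\beta/4+\eps)n$; this is the double-counting argument that has to manufacture the constant $\tfrac14$. Suppose not, and let $W\subseteq T$ be the cells reachable from the entry vertices in the auxiliary digraph. The absence of any exit among them forces that each cell of $W$ — and also each free row and each free column — meets the free lines only in the (at most $\abs W$) symbols $S_W:=\{\sigma(r,c):(r,c)\in W\}$; writing $R_W,C_W$ for the rows and columns of $W$, this makes the cross-shaped region $\bigl((R\cup R_W)\times C\bigr)\cup\bigl(R\times(C\cup C_W)\bigr)$, which has $m^2+2m\abs W$ cells, use only the $\abs W$ symbols of $S_W$. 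As no symbol occurs more than $\beta n$ times, $m^2+2m\abs W\le\abs W\cdot\beta n$, and together with $\abs W\le k=n-m$ this already bounds $m$ away from $n$ — but, when $\beta=1$, only by $m\le\tfrac{3-\sqrt5}{2}\,n$. Pushing this down to $m\le(\tfrac\beta4+o(1))n$ is the delicate point, and I would expect it to need a cleverer choice of rerouting moves and/or an iteration of the whole argument on the leftover configuration, enlarging the nearly-monochromatic region that is being counted. The remaining ingredients — the breadth-first search, the telescoping verification, and the $O(n)$ bound on the number of enlargements — are routine, so the entire difficulty is concentrated in this extremal estimate.
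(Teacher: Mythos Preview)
Your overall framework --- iteratively enlarge a partial transversal, reducing everything to a single improvement step --- matches the paper's. But the gap you flag is real and, as you present it, unclosed: the cross-region count $m^2+2m\abs W\le \abs W\cdot\beta n$ together with $\abs W\le n-m$ only gives $m\le\tfrac{3-\sqrt5}{2}\,n$, and nothing in the augmenting-chain/BFS picture with single rook moves suggests how to push this to $\beta n/4$. So the proposal is a sketch with the crux left open.

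The paper's mechanism for the constant $\tfrac14$ is genuinely different. After permuting so the current transversal is the diagonal, it works with \emph{$2$-swaps}: for a pair $\{i,j\}$, replace the diagonal cells $A_{i,i},A_{j,j}$ by the off-diagonal pair $A_{i,j},A_{j,i}$, thereby inserting the symbol $A_{i,j}$. This is only safe if the displaced symbols $A_{i,i},A_{j,j}$ can be absorbed, so the paper maintains a growing set $\Omega^{<t}$ of absorbable symbols --- initially those repeated on the diagonal --- each equipped with a bounded-size gadget (built recursively out of earlier $2$-swaps, of size at most $4^t$) that reinserts it without losing any diagonal symbol. Writing $n_t$ for the number of diagonal positions whose symbol lies in $\Omega^{<t}$ and $m_t=\abs{\Omega^{<t}}$, there are about $n_t^2$ candidate pairs, of which at most $\beta m_t n$ insert a symbol already in $\Omega^{<t}$. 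Since the diagonal has fewer than $(1-\beta/4-\eps)n$ distinct symbols and $\Omega^{<t}$ contains all the repeated ones, $n_t-m_t\ge(\beta/4+\eps)n$ throughout; completing the square then gives
\[
n_t^2-\beta m_t n \;\ge\; \bigl(m_t-\tfrac{\beta n}{4}\bigr)^2 + \tfrac{\eps\beta}{2}\,n^2 + \eps^2 n^2 + 2\eps m_t n \;>\;0,
\]
so each round produces $\Theta(\eps\beta n)$ new absorbable symbols, and after $O(1/(\eps\beta))$ rounds one of them is absent from the diagonal, at which point its gadget yields the larger transversal. The $\tfrac14$ is exactly the value of $c$ for which $(m+cn)^2-\beta m n$ is nonnegative for all $m\ge 0$: a quadratic-versus-linear comparison that your cross-shaped double count, being linear in $\abs W$ on both sides, cannot reproduce.
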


\cref{thm:main2} follows from \cref{thm:main} by taking $\beta=1$. Our bound of $(1-\beta/4-o(1))n$ improves on all the previous bounds \cite{harris2017algorithmic,harris2020new} coming from local lemma algorithms whenever $\beta\geq 0.14$.

\subsection*{Additional note} Whilst preparing this manuscript we learned of independent breakthrough work of Chakraborti, Christoph, Hunter, Montgomery and 
Petrov \cite{ugh} who show that in fact one can guarantee  transversals of size $n-o(n)$ in an equi-$n$-square. Although Theorem \ref{thm:main} does not achieve such large  transversals, our proof is short, simple and completely independent from theirs which uses probabilistic methods. Therefore we believe it to be of independent interest.

\subsection*{Acknowledgements} \label{sec:acknow}
We are very grateful to Matthew Kwan and Alp M\"uyesser  with whom we had many interesting  discussions leading to the results of this note. We also thank the anonymous reviewers for their suggestions  improving the presentation of this note.

\section{Proof}
For a $\beta$-bounded $n$-square $A$, we define a \emph{permutation} to be a collection of $n$ cells whose support is  the support of a permutation matrix.  We let $diag(A)$ be the permutation of $A$ corresponding to the main diagonal of $A$. For $I\subseteq[n]$ we let $A_I$ be the submatrix of $A$ that covers the entries $A_{i,j}$ with $i,j\in I$ of $A$. Given $I\subseteq [n]$ and a permutation $S_I$ of $A_I$ we define $Extend(S_I)$ to be the permutation of $A$ with cells $S_I \cup diag(A_{[n]\setminus I})$. Thus $Extend(S_I)$ is the permutation of $A$ that coincides with $S_I$ inside $A_I$ and with $diag(A)$ outside $A_I$. We say that an element belongs to or appears in some permutation $S_I$ if it belongs to or appears in some cell of $S_I$.

We also define the following properties.
\begin{itemize}
    \item $(I,S_I)$ has the Property (P1) (with respect to the matrix $A$) if every element that appears on the diagonal of $A$ either appears in $S_I$  or on the diagonal of $A_{[n]\setminus I}$ (equivalently if every element that appears in $diag(A)$ also appears in $Extend(S_I)$). 
    \item $(I,S_I)$ has the Property (P2) (with respect to the matrix $A$) if $S_I$ contains a symbol that does not appear on the diagonal of $A$.
\end{itemize}
Observe that if  there is a pair $(I,S_I)$ that has both properties (P1) and (P2),  then $Extend(S_I)$ is a permutation  that contains a larger  transversal than $diag(A)$.
Indeed $Extend(S_I)$ contains all the symbols appearing on the diagonal of $A$ (by (P1)) and at least one extra symbol (by (P2)). To prove \cref{thm:main} we suppose we have some  transversal contained in $diag(A)$ and try to find a larger  transversal.
For that, we iteratively build up pairs $(I,S_I)$ that have the Property (P1), eventually getting such a pair that also has the Property (P2), at which point we have found a larger  transversal. For the initial building blocks we use pairs $(I,S_I)$ where $I=\{i,j\}$ and $S_{I}=(A_{i,j}, A_{j,i})$ for $i,j\in [n]$ such that $A_{i,i}$ and $A_{j,j}$ are either distinct and appear multiple times in $diag(A)$ or are the same and appear sufficiently many times in $diag(A)$.  We now proceed to the proof of \cref{thm:main}

%\MA{$3/\eps$ became $8/\eps$. Thus now we have $8/\eps$ many iterations and $C:=4^{8/\eps}$}
\begin{proof}[Proof of \cref{thm:main}]
Let  $A$ be a $\beta$-bounded $n$-square. Also let  $\tau:=12/(\eps \beta)$,  $C:=4^{\tau}$. 
It suffices to show that if the largest  transversal contained in $diag(A)$ has size less than $(1-\beta/4-\eps)n$, then we can find a larger  transversal in $A$ in $O(n^2)$ time. Indeed starting with $T$ being the largest  transversal contained in $diag(A)$, we can apply this  $O(n)$ times, each time updating with a larger $T$ and permuting rows and columns so that $T$ lies on the diagonal, until $T$ has the required size.  

Let $\Omega$ be the set of symbols appearing in $A$. For a subset of symbols $\chi \subseteq \Omega$ we let $supp(\chi)$ be the set of indices $i$ such that $A_{i,i}\in \chi$.   We let $\Omega_0\subseteq \Omega$ be the set of symbols $\omega$ with $|supp(\omega)|\geq 2$, that is,  the set of symbols that appear multiple times in $diag(A)$.  For $i\in [n]$, if $|supp(A_{i,i})|\leq C$  we let $R(i):=supp(A_{i,i})$, else we let $R(i):=\{i\}$. For $I\subseteq[n]$
we let $R(I):=\cup_{i\in I} R(i)$ with the convention that $R(\emptyset)=\emptyset$. Observe that $I\subseteq R(I)$ for $I\subseteq [n]$.
%Thus, $R(I)\setminus I$ describes the set of cells on the diagonal of $A$ with an element $A_{i,i},i\in I$ which does not appear more than $C$ times on the diagonal of $A$.   

We initiate by setting   $I(\omega):= \emptyset $ and $S_{I(\omega)}:=\emptyset$ for each $\omega\in \Omega_0$.  Until we find a larger transversal than the largest transversal contained in $diag(A)$ (at which point we terminate the algorithm), we will iteratively construct disjoint sets of symbols $\Omega_1,\Omega_2,...,\Omega_{\tau}\subseteq \Omega\setminus \Omega_0$ as follows. For $1\leq t \leq \tau$, assuming that we have not found a larger transversal during the first $t-1$ iterations, at the beginning of iteration $t$, we are given disjoint sets of symbols  $\Omega_0,\Omega_1,...,\Omega_{t-1}$ all of which appear in $diag(A)$ and  such that for each $\omega\in \Omega^{<t}:=\cup_{i=0}^{t-1}\Omega_i$, we have an index set $I(\omega)$ and a permutation $S_{I(\omega)}$ of $A_{I(\omega)}$. Moreover for $\omega\in \Omega^{<t}$ which do \emph{not} appear in $\Omega_0$, we have that 
\begin{itemize}
    \item[(A1)]  $\omega$ belongs to some cell of $S_{I(\omega)}$,
    \item[(A2)] the pair $(I(\omega),S_{I(\omega)})$ has the Property (P1),
    \item[(A3)]  $supp(\omega)\not\subseteq I(\omega)$ (note that $|supp(\omega)|= 1$ as $\omega\notin \Omega_0$ and $\omega$ appears in $diag(A)$), and
    \item[(A4)] $|I(\omega)|\leq 4^{t-1}$ and $I(\omega) \subseteq supp(\Omega^{<{t-1}})$.
\end{itemize}
Condition (A2) implies that $Extend(S_{I(\omega)})$ contains all the symbols appearing in $diag(A)$. Moreover, for $\omega\in \Omega^{<t}\setminus \Omega_0$, since $Extend(S_{I(\omega)})$ contains the cells in $S_{I(\omega)}$,  $Extend(S_{I(\omega)})$ contains a copy of $\omega$ that appears in the submatrix $A_{I(\omega)}$ of $A$, by (A1), and a second copy of $\omega$ that appears on the diagonal of $A$ and outside $A_{I(\omega)}$, by (A3).  Finally,  (A4) controls the size and the ``location'' of $I(\omega)$ and in extension of both $A_{I(\omega)}$ and $S_{I(\omega)}$.

Given the above, we construct the set of symbols $\Omega_t$ along with the index sets $I(\omega)$ and the permutations $S_{I(\omega)}$, so that for $\omega\in \Omega_t$, (A1)-(A3) are satisfied,  $|I(\omega)|\leq 4^{t}$ and $I(\omega) \subseteq supp(\Omega^{<{t}})$ (in place of (A4)) as follows. For a symbol $\omega\in \Omega$ we say that the pair $\{i,j\} \subset [n]$ \emph{inserts a copy of} $\omega$ if 
\begin{itemize}
\item[(C1)] $A_{i,i}, A_{j,j} \in \Omega^{<t}$,
    \item[(C2)]  $\omega\in \{A_{i,j},A_{j,i}\}$, and
     \item[(C3)] the sets $R(i), R(j), R(I(A_{i,i}))$ and $R(I(A_{j,j}))$ are all pairwise disjoint.
\end{itemize}
Next, we greedily construct a maximal set $Y_t$ of pairwise disjoint\footnote{We take a pair of ordered triples to be disjoint if they are disjoint as sets.} triples $(\omega,i,j)$ such that $\{i,j\}$ inserts a copy of $\omega$ which is \emph{not} featured in $\Omega^{<t}$. We define 
\[\Omega_t:=\{\omega: (\omega,i,j)\in Y_t \text{ for some }i,j\in [n]\}.\]
For $\omega\in \Omega_t$ we let $i(\omega), j(\omega)$ be such that $(\omega,i(\omega),j(\omega))\in Y_t$. Also define 
\[I(\omega):=\{i(\omega),j(\omega)\}\cup I(A_{i(\omega),i(\omega)}) \cup I(A_{j(\omega),j(\omega)})\]
and 
\[S_{I(\omega)}:=S_{I(A_{i(\omega),i(\omega)})} \cup S_{I(A_{j(\omega),j(\omega)})} \cup \{(i(\omega),j(\omega)), (j(\omega),i(\omega))\}.\] 
Recall that $I\subseteq  R(I)$ for any index set $I$. Thus, (C3)  implies that the sets $\{i(\omega),j(\omega)\}, I(A_{i(\omega),i(\omega)})$ and $I(A_{j(\omega),j(\omega)})$ are pairwise disjoint, and in extension that $S_{I(\omega)}$
is a permutation of $A_{I(\omega)}$.

The idea behind the above construction is to first concatenate for $A_{i,i}, A_{j,j} \in \Omega^{<t}$ the permutations 
$S_{I(A_{i,i})}$ and $S_{I(A_{j,j})}$ to potentially build a new permutation $S_{I(A_{i,i}) \cup I(A_{j,j})}$ so that the pair $(I(A_{i,i}) \cup I(A_{j,j}), S_{I(A_{i,i}) \cup I(A_{j,j})})$ has the Property (P1). In such a case every symbol that appears on the diagonal of $A$ also appears in $Extend(S_{I(A_{i,i}) \cup I(A_{j,j})})$. Additionally we hope to get the symbols $A_{i,i}$ and $A_{j,j}$ appearing twice in $Extend(S_{I(A_{i,i}) \cup I(A_{j,j})})$ with one of their appearances being on the diagonal of $A$, in the cells corresponding to $A_{i,i}$ and $A_{j,j}$ respectively. In such a case, as the permutation $S_{I(\omega)}$, for some $\omega\in \{A_{i,j},A_{j,i}\}$ can be obtained from 
$Extend(S_{I(A_{i,i}) \cup I(A_{j,j})})$ by exchanging the cells corresponding to  $A_{i,i}$ and $A_{j,j}$ with the cells corresponding to $A_{i,j}$ and $A_{j,i}$ we have that $S_{I(\omega)}$ sees at least as many distinct symbols as $Extend(S_{I(A_{i,i}) \cup I(A_{j,j})})$. Now, the ``additional"  symbol $\omega$ either does not appear in $Extend(S_{I(A_{i,i}) \cup I(A_{j,j})})$, hence $S_{I(\omega)}$ contains an extra symbol, or it appears multiple times in $Extend(S_{I(\omega)})$ and can potentially be used in subsequent steps. 

For the above to succeed there some technicalities that one should be careful with. We deal with these by introducing (A1)-(A4) and (C3). We now prove, in \cref{claim:claim0}, that the elements in $\Omega_t$ satisfy the required properties. 
\begin{claim}\label{claim:claim0}
For $1\leq t\leq \tau$ and $\omega\in \Omega_t$, (A1)-(A2) are satisfied, $|I(\omega)|\leq 4^{t}$ and $I(\omega) \subseteq supp(\Omega^{<{t}})$. In addition, $supp(\omega) \subseteq I(\omega)$ or $\omega$ does not appear in $diag(A)$ (in place of (A3)).
\end{claim}
\begin{claimproof}
Fix $1\leq t\leq \tau$ and $\omega \in \Omega_t$. To ease the notation we write $i,j$ for $i(\omega)$ and $j(\omega)$ respectively. Note that the elements in $\Omega_0$ trivially satisfy (A2) and (A4). % and also have that $supp(\omega')\not\subseteq I(\omega')$ as in (A3). They do not n

Condition (C1) implies that $i,j \in supp(\Omega^{<t})$, and $A_{i,i},A_{j,j}$ satisfy (A4). Therefore, \[|I(\omega)| = |\{i,j\}|+| I(A_{i,i})|+| I(A_{j,j}) | \leq 2+ 4^{t-1}+ 4^{t-1}<4^t\] 
and 
\[I(\omega) \subseteq  supp(\Omega^{<t}) \cup  supp(\Omega^{<t-1}) \cup  supp(\Omega^{<t-1}) = supp(\Omega^{<t}).\]

Condition (C2) implies that $S_{I(\omega)}$ contains $\omega$ and so (A1) is satisfied.

As  $\Omega_t$ is disjoint from $\Omega^{<t}$ we have that $\omega  \notin  \Omega^{<t}$. Therefore, if $\omega$ appears in $diag(A)$, then $supp(\omega)\not\subseteq  supp(\Omega^{<t})$. On the other hand, from the above, $I(\omega)\subseteq supp(\Omega^{<t})$. Therefore $supp(\omega)\not\subseteq  I(\omega)$ or $\omega$ does not appear in $diag(A)$.

It remains to prove that (A2) is satisfied, i.e. $(I(\omega),S_{I(\omega)})$ has the property (P1). For this, it suffices to show that for $q\in I(\omega)$ the symbol $A_{q,q}$ either appears in $S_{I(\omega)}$ or on the diagonal of $A_{[n]\setminus I(\omega)}$ (as for $q\notin I(\omega)$, $A_{q,q}$ clearly appears in $diag(A_{[n]\setminus I(\omega)})\subseteq Extend(S_I)$). We do so by considering the following case distinction.  
\vspace{3mm}
\\\textbf{Case 1: $|supp(A_{q,q})| > C$.} 
 Then $A_{q,q}$ appears at least $C+1$ times on the diagonal of $A$. As $|I(\omega)|\leq  4^t \leq 4^{\tau}=C$ we have that there exist a copy of $A_{q,q}$ that appears on the diagonal of $A_{[n]\setminus I(\omega)}$.
 \\ \textbf{Case 2: $q=i, A_{i,i} \in \Omega_0$ (similarly if $q=j, A_{j,j} \in \Omega_0$) and $|supp(A_{q,q})| \leq C$.} As $A_{i,i}$ appears at most $C$ times on $diag(A)$, all of the corresponding cells are indexed by $R(i)$. In addition, since $A_{i,i}\in \Omega_0$ we have that $A_{i,i}$ appears at least twice on the diagonal of $A$, and, by definition, $I(A_{i,i})=\emptyset$. Thereafter, since $I \subseteq R(I)$ for all $I\subseteq [n]$, (C3) gives that among the sets $\{i\}, \{j\}, I(A_{i,i})(=\emptyset)$ and $I(A_{j,j})$ only the set $\{i\}$ intersects $R(i)$. Therefore, $R(i)\setminus I(\omega)$ contains at least one index $r$ and the corresponding cell on the diagonal has entry $A_{r,r}=A_{i,i}$ and belongs to the diagonal of $A_{[n]\setminus I(\omega)}$. 
\\\textbf{Case 3: $q\in \{i,j\}$, $A_{q,q}\notin \Omega_0$ and $|supp(A_{q,q})| \leq C$.}   Then, by (A1), $A_{q,q}$ appears in some cell of $S_{I(A_{q,q})}$. In this case, as $S_{I(\omega)}$ contains all the cells of $S_{I(A_{q,q})}$ (by definition) we have that  $A_{q,q}$ appears in some cell of $S_{I(A_{q,q})}$.
\\\textbf{Case 4: $q\in I(A_{i,i})$ (similarly if $q\in I(A_{j,j})$) and $|supp(A_{q,q})|\leq C$.}  Then, as $(I(A_{i,i}), S_{I(A_{i,i})})$ has the property (P1) we have that either $A_{q,q}$ belongs to $S_{I(A_{i,i})}$ or it appears on the diagonal of $A_{[n]\setminus I(A_{i,i})}$. In the first case as $S_{I(\omega)}$ contains $S_{I(A_{i,i})}$, $S_{I(\omega)}$ also contains $A_{q,q}$.

As $A_{q,q}$ appears at most $C$ times on $diag(A)$, all of the corresponding cells are indexed by $R(q)$. Therefore, in the second case, (C3) implies that no cell on the diagonal of $A_{[n]\setminus I(A_{i,i})}$ with the symbol $A_{q,q}$ (i.e. indexed by $R(q)$) belongs to $A_{I(A_{j,j})}\cup \{i,j\}$, thus at least one such cell belongs to  $diag(A_{[n]\setminus I(\omega)})$.
\end{claimproof}

Thus we have defined $\Omega_t$ so that each $\omega\in \Omega_t$ has an index set $I(\omega)$ and a permutation $S_{I(\omega)}$
satisfying (A1)-(A2), $|I(\omega)|\leq 4^{t}$ and $I(\omega) \subseteq supp(\Omega^{<{t}})$. In addition, if $\omega$ appears in $diag(A)$, then (A3) is also satisfied. Note that this step took time $O(n^2)$ coming from checking all the pairs $\{i,j\}\subset [n]$ to see if they insert some $\omega \notin \Omega^{<t}$. Now if there is some $\omega_*\in \Omega_t$ that does not feature in $diag(A)$, we claim that we are done.  Indeed in such a case, as $(I(\omega_*),S_{I(\omega_*)})$ has the property (P1) and $\omega_*\in  S_{I(\omega)}$ we have that the permutation $Extend(S_{I(\omega_*)})$ has more distinct elements than $diag(A)$ and so we have found a  transversal larger than the one contained in $diag(A)$. 

If $\Omega_t$ does not contain any such $\omega_*$, then  every $\omega\in \Omega_t$ appears in $diag(A)$. In this case  we continue to the next iteration $t+1$. Now the proof of \cref{thm:main} follows from the following claim.% which we prove shortly.  
\begin{claim}\label{claim:claim1}
For $1\leq t \leq \tau$, if the largest  transversal contained in $diag(A)$ has size less than $\left(1-\tfrac{\beta}{4}-\eps\right)n$  and $\Omega^{<t}$ contains only elements appearing on the diagonal of $A$  then $|\Omega_t|\geq  \eps \beta  n/12$. 
\end{claim}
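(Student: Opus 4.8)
The plan is to run a double counting / averaging argument on the pairs $\{i,j\}$ that could insert a new symbol, showing that if $\Omega_t$ were too small then the maximality of $Y_t$ would force a large transversal already inside $diag(A)$. First I would set up the relevant bookkeeping: let $D:=supp(\Omega^{<t})$ be the set of diagonal positions whose symbol lies in $\Omega^{<t}$, and let $B$ be the set of ``blocked'' positions, namely those $q\in[n]$ lying in $R(i)$ or $R(j)$ or $R(I(A_{i,i}))$ or $R(I(A_{j,j}))$ for some triple $\{\omega,i,j\}\in Y_t$; equivalently positions that are ``used up'' by the greedy construction. Since each triple in $Y_t$ contributes at most $2\cdot 4^{t-1}+2 + 2C/(\text{something})$... more precisely each of $R(i),R(j)$ has size at most $C$ and each $R(I(A_{i,i}))$ has size at most $|I(\omega)|\cdot C\le 4^t C$, so $|B|\le |Y_t|\cdot O(4^{\tau}C)=|\Omega_t|\cdot O(1)$ where the implied constant depends only on $\eps,\beta$. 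If $|\Omega_t|<\eps\beta n/12$ this makes $|B|$ negligible compared to $n$ — say $|B|\le \eps n/2$ after absorbing constants into the choice of $\tau, C$; I'd want to double-check the exact constant $12$ matches here, but that is the routine part.

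Next, the core step: I would show that for \emph{every} position $q\in D\setminus B$, the symbol on the diagonal there, $A_{q,q}\in\Omega^{<t}$, can be extended — i.e. $Extend(S_{I(A_{q,q})})$ is a permutation containing every symbol of $diag(A)$ (by (A1)-(A3)) — but none of these symbols is new, so all of them lie on $diag(A)$. The point is to use these extensions together with the structure to build one big transversal. Actually the cleaner route: suppose towards a contradiction $|\Omega_t|<\eps\beta n/12$. Consider the rows/columns indexed by $[n]\setminus B$. For any $q\in D\setminus B$ with $A_{q,q}=\omega\in\Omega^{<t}$ and for any pair $\{q,q'\}$ with $q'\in D\setminus B$, $A_{q',q'}=\omega'\in\Omega^{<t}$, disjointness of the relevant $R(\cdot)$'s is automatic since $q,q'\notin B$; so if $\{q,q'\}$ inserted a symbol not in $\Omega^{<t}$, maximality of $Y_t$ would be violated. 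Hence for \emph{all} such pairs, both $A_{q,q'}$ and $A_{q',q}$ lie in $\Omega^{<t}$. This means the submatrix $A_D$ restricted to $D\setminus B$ is entirely filled with symbols from $\Omega^{<t}$ off the diagonal too (on the positions we control), and $\Omega^{<t}\subseteq$ the set of diagonal symbols, which has size at most the size of the largest transversal in $diag(A)$ — call it $s<(1-\beta/4-\eps)n$.

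The final step is the counting contradiction. Every symbol in $\Omega^{<t}$ has $|supp(\omega)|\le C$ except those in $\Omega_0$, but even lumping these together: the number of diagonal positions is at most $s$, so $|D|\le s$. Now count appearances of symbols of $\Omega^{<t}$ inside the $(n-|B|)\times(n-|B|)$ submatrix on rows and columns $[n]\setminus B$: we've shown every entry of this submatrix whose row and column both lie in $D\setminus B$ carries a symbol of $\Omega^{<t}$, giving at least $(|D|-|B|)^2$ such appearances; but a $\beta$-bounded square has at most $\beta n$ copies of each symbol and $|\Omega^{<t}|$ symbols, so at most $\beta n|\Omega^{<t}|\le \beta n\cdot s$ appearances total. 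This yields $(|D|-|B|)^2\le \beta n s$. Here I need a lower bound on $|D|$: every symbol of $\Omega^{<t}\setminus\Omega_0$ occupies exactly one diagonal cell and those cells are distinct and disjoint from the at least $|\Omega_0|$... hmm — I expect the actual argument instead bounds the number of diagonal positions \emph{not} in $supp(\Omega^{<t})$, i.e. positions $q$ whose diagonal symbol is ``new'', uses that $diag(A)$ has $\le s$ distinct symbols so at least $n-s$ positions are ``repeats'' hence lie in $supp(\Omega_0)$, and plays this against $\beta$-boundedness ($|\Omega_0|\ge (n-s)/(\beta n)\cdot$ something). \textbf{The main obstacle} I anticipate is exactly this bookkeeping: correctly choosing $D$ (or its complement) and combining (i) maximality of $Y_t$, (ii) $\beta$-boundedness, and (iii) the bound $s<(1-\beta/4-\eps)n$ so that the numbers collide with room to spare, extracting precisely the constant $\eps\beta/12$. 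Everything else — verifying $|B|$ is $o(n)$, that disjointness is free outside $B$, that an extendable pair would contradict maximality — is straightforward from the definitions already laid out.
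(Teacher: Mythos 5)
Your high-level plan (double counting using maximality of $Y_t$, $\beta$-boundedness, and the gap between $|supp(\Omega^{<t})|$ and $|\Omega^{<t}|$) is the same counting that drives the paper's proof, but the paper does it directly (lower-bounding the number of triples inserting a new symbol and dividing by the maximum ``degree'' of a triple) rather than by contradiction. The difficulty is in your core step, and there it has a genuine gap.

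You claim that for $q,q'\in D\setminus B$, ``disjointness of the relevant $R(\cdot)$'s is automatic since $q,q'\notin B$.'' This is false. Condition (C3) for the pair $\{q,q'\}$ asks that $R(q),R(q'),R(I(A_{q,q})),R(I(A_{q',q'}))$ be pairwise disjoint, and all four sets are determined by $q,q'$ and by the index sets $I(\cdot)$ built in earlier iterations --- they have nothing to do with which triples happen to sit in $Y_t$. So excising the $Y_t$-blocked positions $B$ gives you no control over whether, say, $R(q)$ meets $R(I(A_{q',q'}))$. What membership outside the (much smaller) set $\bigcup_{\{\omega,i,j\}\in Y_t}\{i,j\}$ \emph{does} give you, via maximality, is that any pair $\{q,q'\}$ that legitimately inserts a symbol must insert a symbol already in $\Omega^{<t}\cup\Omega_t$; but you still have to separately bound the number of pairs in $D\times D$ that fail (C3). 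The paper does exactly this with the two inductive statements \cref{induction:base_case} and \cref{induction}, which bound, for each $\ell\in[n]$, the number of indices $j$ with $\ell\in R(j)$ (resp.\ $\ell\in R(I(A_{j,j}))$); each $\ell$ then ``witnesses'' the failure of (C3) for only $O(4^{2\tau})$ pairs, giving $O(n)$ bad pairs in total. Your proposal has no analogue of this: there is no unary ``blocked set'' $B$ whose complement certifies (C3), because (C3) is an intrinsically pairwise condition.

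There is also a quantitative obstruction to your specific plan. Even as a crude bound you yourself compute $|B|\le|\Omega_t|\cdot O(4^{\tau}C)=|\Omega_t|\cdot O(4^{2\tau})$. Under the contradiction hypothesis $|\Omega_t|<\eps\beta n/12$ this gives $|B|<4^{2\tau}\eps\beta n/12$, and with the paper's $\tau=12/(\eps\beta)$ the constant $4^{2\tau}\eps\beta/12$ is enormously larger than $1$ for small $\eps\beta$, so $|B|$ is not ``negligible compared to $n$'' --- it is not even bounded by $n$. You suggest absorbing this into the choice of $\tau,C$, but $\tau$ is forced to be at least roughly $(\beta/4+\eps)/(\eps\beta/12)$ so that the $\tau$ disjoint sets $\Omega_1,\dots,\Omega_\tau$ can exhaust the deficit, and $C=4^\tau$ is then forced by the growth of $|I(\omega)|$; there is no slack to make $4^{2\tau}\eps\beta/12<1$. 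The reason the paper's direct argument survives is that its error term is $4^tC=O(4^{2\tau})$, a \emph{constant independent of $n$}, which vanishes after division by $5n$; your $|B|$ is linear in $n$ with an unbounded coefficient.
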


Assuming  that the diagonal of $A$ contains less than $(1-\beta/4-\eps)n$ distinct elements and $\Omega^{<\tau}$ contains only elements appearing on the diagonal of $A$, 
then as  the sets $\Omega_1,\Omega_2,..., \Omega_{\tau}$ are pairwise disjoint, \cref{claim:claim1}  implies that $\cup_{t=1}^{\tau} \Omega_t$ contains at least $\tau \cdot \eps \beta  n/12= n$ distinct elements. Therefore there exists $\omega_* \in \cup_{t=1}^{\tau} \Omega_t$ that does not belong to the diagonal of $A$. So after at most $\tau$ iterations, the algorithm will terminate and deliver a larger  transversal. %To complete the proof of \cref{thm:main} it thus remains to prove \cref{claim:claim1}.

\begin{claimproof}[Proof of \cref{claim:claim1}] 
We start by proving the statements \cref{induction:base_case} and \cref{induction} stated below.
\begin{align}
\text{For $r\in [n]$ there exist at most $C$ indices $s\in[n]$  such that $r\in R(s)$.}\label{induction:base_case}    
\end{align}
This follows because, by definition, each of the sets $R(r)$ has size at most $C$. 

\begin{align}
\text{For $t\geq 0$, $r\in [n]$ there are at most $4^tC$ indices $s\in [n]$ with  $A_{s,s}=\omega\in \Omega^{<t+1}$ and  } r \in R(I(\omega)).\label{induction}  
\end{align}

The base case $t=0$ holds trivially as $I(\omega)=\emptyset$ for $\omega\in \Omega_0$ and $R(\emptyset)=\emptyset$. Let $1\leq t \leq \tau$, assume that \eqref{induction} holds for $t-1$ and fix $r\in [n]$. Let $J_1$ be the set of indices $s'$ such that $r$ belongs to $R(s')$ and $J_2$ be the set of indices $s'$ such that $A_{s',s'}\in \Omega^{<t}$ %(thus $A_{j,j}=\omega$ for some $\omega\in \Omega^{<t}$) 
and $r$ belongs to $R(I(A_{s',s'}))$. By the definition of $I(\omega)$, we have that
\begin{align*}
R(I(\omega))& =R(i(\omega))\cup R(j(\omega))\cup  R(I(A_{i(\omega),i(\omega)})) \cup  R(I(A_{j(\omega),j(\omega)})).    
\end{align*}    
 Thus $r$ belongs to $R(I(\omega))$ for $\omega\in\Omega_t$ only if $\{i(\omega),j(\omega)\} \cap (J_1\cup J_2)\neq \emptyset$.
 By \cref{induction:base_case} and the induction hypothesis  there exist at most $C+4^{t-1}C$ many such choices 
  for $i(\omega)$ or $j(\omega)$ so that $\{i(\omega),j(\omega)\} \cap (J_1\cup J_2)\neq \emptyset$.   Each such choice appears in at most one triple $(\omega,i(\omega),j(\omega)) \in Y_t$, since the triples in $Y_t$ are pairwise disjoint. Thus there are  at most $2(C+4^{t-1}C)+4^{t-1}C \leq 4^tC$ indices $s\in[n]$ with  $A_{s,s}=\omega \in \Omega_t\cup \Omega^{<t}=\Omega^{<t+1}$ such that  $r\in R(I(\omega))$. This completes the induction and proof of \eqref{induction}. 

Now observe that a pair $(i,j)$ with $A_{i,i}, A_{j,j} \in \Omega^{<t}$ inserts a copy of some element $\omega\in \{A_{i,j},A_{j,i}\}$,  whenever the condition (C3) is satisfied. If (C3) is not satisfied then there exists $\ell \in [n]$ that belongs to at least $2$ of the sets $R(i(\omega)), R(j(\omega)), R(I(A_{i(\omega),i(\omega)}))$ and $R(I(A_{j(\omega),j(\omega)}))$. In such a case we say that $\ell$ witnesses that the pair $(i,j)$ is useless. 
The observations \cref{induction:base_case} and \cref{induction} imply that each $\ell\in [n]$  witnesses the uselessness of at most $\binom{4}{2}4^tC\leq 4^{3t}C$ such pairs.

Let  $n_t=|supp (\Omega^{<t})|$ and $m_t=|\Omega^{<t}|$.  As $diag(A)$ contains fewer than $(1-\beta/4-\eps)n$ distinct symbols and $\Omega^{<t}$ features only symbols in $diag(A)$, we have that $n_t-m_t\geq (\beta/4 +\eps)n$.  The above arguments imply that at step $t$ there exists a multiset $Z_t$ of at least $2\cdot \binom{n_t}{2} -  4^{3t}Cn$ triples $(\omega,i,j)$ such that $\{i,j\}$ inserts $\omega$ and $i<j$. In this multiset $Z_t$ the triple $(\omega,i,j)$ appears twice if $A_{i,j}=A_{j,i}=\omega$ (the two occurrences count for the two distinct occurrences of $\omega$) and once otherwise. Thus each element of $Z_t$ corresponds to a choice of a pair $\{i,j\}\subseteq supp(\Omega^{<t})$ and  a choice of one of the cells corresponding to $A_{i,j}$ or $A_{j,i}$. From those triples at most $\beta m_t n$ contain an element $\omega \in \Omega^{<t}$ as $A$ is $\beta$-bounded. From the rest, each triple shares the same element $\omega$ with at most $\beta n$ other triples (corresponding to the occurrences of $\omega$ in $A$)  and the same $i$ or $j$ with at most $4n$ other triples (corresponding to the cells in the columns and rows indexed by $i$ or $j$). In total, each of the triples intersects at most $5n$ other triples. Thus, due the maximality of $Y_t$,
\begin{align*}
    |\Omega_t|&\geq \frac{n_t(n_t-1) - 4^{3t}C n -\beta m_t n}{5n}
     =\frac{n_t^2-\beta m_tn}{5n}- \frac{n_t}{5n} -\frac{ 4^{3t}C}{5}
     \geq \frac{\left(m_t+ \left(\frac\beta4 +\eps\right)n\right)^2-\beta m_tn}{5n}-4^{3t}C
    \\ &= \frac{ \left(\frac\beta4 n -m_t\right)^2+\eps \beta n^2/2+\eps^2n^2+2\eps m_t n}{5n}-4^{3t}C \geq \frac{\eps \beta n^2}{10n}-4^{3t}C \geq \frac{\eps \beta n}{12},
\end{align*}
using here that $n$ is sufficiently large. This concludes the proof of the claim and hence the theorem.
\end{claimproof}
\end{proof}

\bibliographystyle{amsplain_initials_nobysame_nomr}
\bibliography{main.bib}

\end{document}